
\documentclass{amsart}


\def\dv{\mathop{\rm div}}

\def\dx{\hspace{2pt}{\rm d}x}

\def\l2{_{L_2(\Omega)}}
\def\nl2{_{[L_2(\Omega)]^n}}
\setlength{\arraycolsep}{.5mm}

\def\T{\mathcal{T}}
\def\d{\hspace{2pt}{\rm d}}

\newcounter{saveeqn}


\newtheorem{theorem}{Theorem}[section]
\newtheorem{lemma}[theorem]{Lemma}

\newtheorem{remark}[theorem]{\it Remark}

\theoremstyle{definition}

\theoremstyle{remark}

\numberwithin{equation}{section}

\begin{document}

\title{Local energy estimates for the finite element method on sharply varying grids}

\author{Alan Demlow}
\address{University of Kentucky \\ Department of Mathematics \\ 715 Patterson Office Tower \\ Lexington, KY  40506--0027}
\email{demlow@ms.uky.edu}
\thanks{The first author was partially supported by NSF grant DMS-0713770.}

\author{Johnny Guzm\'an}
\address{Brown University \\  Division of Applied Mathematics \\ 182 George St.\\ Providence, RI  02906}
\email{johnny\_guzman@brown.edu}
\thanks{The second author was partially supported by NSF grant DMS-0503050.}

\author{Alfred H. Schatz}
\address{Department of Mathematics, Malott Hall\\Cornell University\\Ithaca, NY  14853}
\thanks{The third author was partially supported by NSF grant DMS-0612599.  }

\subjclass[2000]{Primary 65N30, 65N15}



\begin{abstract}  Local energy error estimates for the finite element method for elliptic problems were originally proved in 1974 by Nitsche and Schatz.  These estimates show that the local energy error may be bounded by a local approximation term, plus a global ``pollution'' term that measures the influence of solution quality from outside the domain of interest and is heuristically of higher order.  However, the original analysis of Nitsche and Schatz is restricted to quasi-uniform grids.  We present local a priori energy estimates that are valid on shape regular grids, an assumption which allows for highly graded meshes and which much more closely matches the typical practical situation.  Our chief technical innovation is an improved superapproximation result. 
\end{abstract}
 
\maketitle

\markboth{A. DEMLOW, J. G\'UZMAN, AND A. SCHATZ}{LOCAL ENERGY ESTIMATES ON SHARPLY VARYING GRIDS}

\section{Introduction}
In this note we prove local energy error estimates for the finite element method for second-order linear elliptic problems on highly refined triangulations.  Most a priori error analyses for the finite element method in norms other than the global energy norm place severe restrictions on the mesh.   In particular, such error analyses are most often carried out under the assumption that the grid is {\it quasi uniform}, that is, all simplices in the mesh are required to have diameter equivalent to some fixed parameter $h$.  The typical practical situation is rather different.  Many (especially adaptive) finite element codes enforce only {\it shape regularity} of elements, meaning that all elements in the mesh must have bounded aspect ratio.  Though it places a weak restriction upon the rate with which the diameters of elements in the mesh may change, shape regularity allows for the locally refined meshes that are needed to resolve the singularities and other sharp local variations of the solution that occur in the majority of practical applications.

In the work \cite{NS74} of Nitsche and Schatz, local energy error estimates were established for  interior subdomains under the assumption that the finite element grid is quasi-uniform.  Such local energy estimates are helpful in understanding basic error behavior, especially ``pollution effects'' of global solution properties on local approximation quality, and they also provide an important technical tool in many proofs of pointwise bounds for the finite element method (cf. \cite{SW95}).  In addition, the most relevant error notion in applications is often related to some {\it local} norm or functional instead of to the global energy error, as evidenced by the recent surge of interest in ensuring control of the error in calculating  ``quantities of interest'' in adaptive finite element calculations instead of merely controlling the default global energy error (cf. \cite{BR01}).    As a final example of the applicability of local energy estimates, we mention that the estimates of \cite{NS74} have been used to justify certain approaches to parallelization and adaptive meshing (cf. \cite{BH00}).  Thus local energy estimates are of broad and fundamental importance in finite element theory.  

Here we prove local energy error estimates under the assumption that the finite element triangulation is shape regular instead of under the more restrictive assumption of quasi uniformity required in \cite{NS74}.  In other words, we essentially prove that the results of Nitsche and Schatz hold under the restrictions typically placed upon meshes in practical codes, which in particular allow for highly graded grids.  Our main innovation is a novel ``superapproximation'' result which we state and prove in \S2.  In \S3 we then prove a local energy bound that is valid on grids that are only assumed to be shape-regular.  As in \cite{NS74}, our results are valid for operators that are only {\it locally} elliptic, so that the PDE under consideration may be degenerate or change type outside of the domain of interest.  In contrast to \cite{NS74}, the results we present here are valid up to the domain boundary, allow for nonhomogeneous Neumann, Dirichlet, and mixed boundary conditions, and also require only $L_\infty$ regularity of the coefficients of the differential operator.  

\section{An improved superapproximation result}
\label{sec2}
An essential feature of the proofs of local error estimates given in \cite{NS74}, and also of essentially all published proofs of local and maximum-norm a priori error estimates for finite element methods, is the use of superapproximation properties.  In essence, superapproximation bounds establish that a function in the finite element space multiplied by any smooth function can be approximated exceptionally well by the finite element space. 

In order to fix thoughts, we shall in this section assume for simplicity that $\Omega \subset \mathbb{R}^n$ is a polyhedral domain; a more general situation is considered in \S3 below.  Let $\T_h$ be a simplicial decomposition of $\Omega$.  Denote by $h_T$ the diameter of the element $T \in \T_h$.  We assume throughout that the elements in $\T_h$ are shape-regular, that is, each simplex $T \in \T_h$ contains a ball of diameter $c_1 h_T$ and is contained in a ball of radius $C_1 h_T$, where $c_1$ and $C_1$ are fixed.  Let also $S_h^r$ be a standard Lagrange finite element space consisting of continuous piecewise polynomials of degree $r-1$.   We shall use standard notation for Sobolev spaces, norms, and seminorms, e.g., $\|u\|_{H^1(\Omega)}=(\int_\Omega (u^2+|\nabla u|^2) \dx )^{1/2}$, $|u|_{W_p^k(\Omega)}=(\sum_{|\alpha|=k} \|D^\alpha u\|_{L_p(\Omega)}^p)^{1/p}$, etc.  

A standard superapproximation result is as follows.  Let $\omega \in C^\infty(\Omega)$ with $|\omega|_{W_\infty^j(\Omega)} \le C d^{-j}$, $0 \le j \le r$.  Then for each $\chi \in S_h^r$, there exists $\eta \in S_h^r$ such that for each $T \in \T_h$ satisfying $d \ge h_T$, 
\begin{equation}
\|\omega \chi-\eta\|_{H^1(T)} \le C( \frac{h_T}{d} \|\nabla \chi\|_{L_2(T)}+ \frac{h_T}{d^2} \|\chi\|_{L_2(T)}).
\label{eq2-1}
\end{equation}
Our modified result follows (cf. \cite{Guz06}).
\begin{theorem}
\label{t2-1}
Let $\omega \in C^\infty(\Omega)$ with $|\omega|_{W_\infty^j(\Omega)} \le C d^{-j}$ for $0 \le j \le r$.  Then for each $\chi \in S_h^r$, there exists $\eta \in S_h^r$ such that for each $T \in \T_h$ satisfying $d \ge h_T$,
\begin{equation}
\| \omega^2 \chi - \eta\|_{H^1(T)} \le C (\frac{h_T}{d} \|\nabla (\omega \chi)\|_{L_2(T)}+ \frac{h_T}{d^2} \|\chi\|_{L_2(T)}).  
\label{eq2-2}
\end{equation}
\end{theorem}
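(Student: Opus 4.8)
The plan is to take $\eta = I_h(\omega^2\chi)$, the standard Lagrange nodal interpolant of $\omega^2\chi$ into $S_h^r$, and to estimate the interpolation error on each $T$, following the classical route behind \eqref{eq2-1} but keeping careful track of one factor of $\omega$. Since $\omega^2\chi$ is smooth on $T$ and $\eta$ matches it at the interpolation nodes, a scaled interpolation estimate gives $\|\omega^2\chi-\eta\|_{H^1(T)}\le Ch_T^{r-1}|\omega^2\chi|_{H^r(T)}$, so the task reduces to bounding the top-order seminorm $|\omega^2\chi|_{H^r(T)}$.

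First I would expand $D^\alpha(\omega^2\chi)$ for $|\alpha|=r$ by the Leibniz rule, regarding $\omega^2\chi$ as the smooth factor $\omega^2$ times the polynomial $\chi$. Because $\chi$ has degree $r-1$ on $T$, the term in which every derivative lands on $\chi$ vanishes, leaving only contributions $\binom{\alpha}{\beta}D^\beta(\omega^2)\,D^{\alpha-\beta}\chi$ with $1\le|\beta|\le r$. The decisive observation is that for $|\beta|=1$ one has $D^\beta(\omega^2)=2\omega\,\partial_i\omega$, which carries an explicit factor of $\omega$, whereas the higher coefficients satisfy only $|D^\beta(\omega^2)|\le Cd^{-|\beta|}$ (the bound inherited from the hypotheses on $\omega$, since $\omega^2$ obeys $|\omega^2|_{W_\infty^j(\Omega)}\le Cd^{-j}$ as well).

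The heart of the argument, and the step I expect to be the main obstacle, is a weighted inverse estimate guaranteeing that the $|\beta|=1$ contribution produces $\|\nabla(\omega\chi)\|_{L_2(T)}$ rather than the coarser $\|\nabla\chi\|_{L_2(T)}$ of \eqref{eq2-1}. Concretely, I would establish that for $1\le m\le r$,
\[
\|\omega\,D^m\chi\|_{L_2(T)}\le Ch_T^{-(m-1)}\big(\|\nabla(\omega\chi)\|_{L_2(T)}+d^{-1}\|\chi\|_{L_2(T)}\big).
\]
The device is to freeze $\omega$ at the centroid $x_0$ of $T$, writing $\omega=\omega(x_0)+(\omega-\omega(x_0))$; since $d\ge h_T$, the oscillation obeys $\|\omega-\omega(x_0)\|_{L_\infty(T)}\le Ch_Td^{-1}$. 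To the constant part one applies the ordinary (shape-regular) polynomial inverse estimate to $D^m\chi$ and then reattaches the weight through $\omega\nabla\chi=\nabla(\omega\chi)-\chi\nabla\omega$ together with $|\nabla\omega|\le Cd^{-1}$; the oscillatory part is absorbed into $d^{-1}\|\chi\|_{L_2(T)}$ using the oscillation bound and $\|D^m\chi\|_{L_2(T)}\le Ch_T^{-m}\|\chi\|_{L_2(T)}$. The delicacy is that $\omega$ may vanish on part of $T$, so no pointwise lower bound on $\omega$ is available and the conversion of $\omega\nabla\chi$ into $\nabla(\omega\chi)$ must be effected purely through this freezing-and-reattaching mechanism.

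Finally I would assemble the pieces. Applying the weighted estimate with $m=r-1$ to the $|\beta|=1$ term gives, after multiplication by $h_T^{r-1}$, precisely $C\frac{h_T}{d}\|\nabla(\omega\chi)\|_{L_2(T)}+C\frac{h_T}{d^2}\|\chi\|_{L_2(T)}$. For each term with $|\beta|=k\ge2$ I would combine $|D^\beta(\omega^2)|\le Cd^{-k}$ with $\|D^{\alpha-\beta}\chi\|_{L_2(T)}\le Ch_T^{-(r-k)}\|\chi\|_{L_2(T)}$, so that after multiplication by $h_T^{r-1}$ the contribution is at most $C(h_T/d)^{k-1}d^{-1}\|\chi\|_{L_2(T)}$; since $d\ge h_T$ and $k-1\ge1$, each is dominated by $C\frac{h_T}{d^2}\|\chi\|_{L_2(T)}$. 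Summing the finitely many terms yields \eqref{eq2-2}.
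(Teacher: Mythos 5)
Your proposal is correct and follows essentially the same route as the paper's proof: the same choice $\eta=I_h(\omega^2\chi)$, the same reduction to the top-order seminorm of $\omega^2\chi$ with the Leibniz expansion whose leading term vanishes, the same isolation of the $|\beta|=1$ contribution via $D^\beta(\omega^2)=2\omega D^\beta\omega$, and the same device of freezing $\omega$ (at the centroid rather than at its mean value over $T$) with an $O(h_T/d)$ oscillation bound and inverse estimates to reattach the weight inside $\nabla(\omega\chi)$. The only cosmetic difference is that you work with $L_2$-based seminorms throughout, whereas the paper passes through $W_\infty^1(T)$ and $W_\infty^r(T)$ with explicit $h_T^{n/2}$ factors, which sidesteps any reliance on $H^r(T)\hookrightarrow C^0(T)$ in the interpolation estimate.
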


\begin{remark} {\rm
There are two differences between (\ref{eq2-1}) and (\ref{eq2-2}).  First, in (\ref{eq2-1}) we consider approximation of $\omega \chi$, whereas in (\ref{eq2-2}) we consider approximation of $\omega^2 \chi$.  Secondly, in (\ref{eq2-1}) the norms on the right hand side involve only $\chi$, whereas in (\ref{eq2-2}) the $H^1$ seminorm involves $\omega \chi$.  If we think of $\omega$ as a cutoff function, this distinction becomes vitally important:  $\omega \chi$ has the same support as $\omega^2 \chi$, whereas the support of $\chi$ is generally larger than that of $\omega \chi$.  This seemingly minor difference will allow us to establish local energy estimates on grids that are only assumed to be shape regular.
}\end{remark}

\begin{proof} Let $I_h:C^0(\Omega) \rightarrow S_h^r$ be the standard Lagrange interpolant.  We shall choose $\eta=I_h (\omega^2 \chi)$ in (\ref{eq2-2}).  For $T \in \T_h$, we may use standard approximation theory (cf. \cite{BS02}) to calculate
\begin{equation}
\begin{split}
\|\omega^2 \chi-I_h(\omega^2\chi)\|_{H^1(T)} \le & C h_T^{n/2} \|\omega^2 \chi-I_h (\omega^2 \chi)\|_{W_\infty^1(T)}
\\ \le & C h_T^{n/2+r-1} | \omega^2 \chi|_{W_\infty^{r}(T)}.
\end{split}
\label{eq2-3}
\end{equation}
Noting that $D^\alpha \chi=0$ for all multiindices $\alpha$ with $|\alpha|=r$, recalling that $\frac{h_T}{d} \le 1$, and employing inverse estimates, we compute
\begin{equation}
\begin{split}
C h_T^{n/2+r-1} & | \omega^2 \chi|_{W_\infty^{r}(T)} \le  C (\sum_{i=2}^r h_T^{i-1} |\omega^2|_{W_\infty^i(T)}) \|\chi\|_{L_2(T)}
\\ & + Ch_T^{n/2+r-1} \sum_{|\alpha|=1, |\beta|=r-1} \|D^\alpha \omega^2 D^\beta \chi\|_{L_\infty(T)}
\\ \le & C \frac{h_T}{d^2} \|\chi\|_{L_2(T)}+Ch_T^{n/2+r-1} \sum_{|\alpha|=1, |\beta|=r-1} \|D^\alpha \omega^2 D^\beta \chi\|_{L_\infty(T)}.  
\end{split}
\label{eq2-4} 
\end{equation}

We next consider the terms $\|D^\alpha \omega^2 D^\beta \chi\|_{L_\infty(T)}$ above.  Since $|\alpha|=1$, we have $D^\alpha \omega^2=2 \omega D^\alpha \omega$.  Let $\hat{\omega}=\frac{1}{|T|} \int_T \omega \dx$ so that $\|\omega-\hat{\omega}\|_{L_\infty(T)} \le C h_T |\omega|_{W_\infty^1(T)} \le C \frac{h_T}{d}$.  Employing inverse estimates, we thus have
\begin{equation}
\begin{split}
Ch_T^{n/2+r-1} & \sum_{|\alpha|=1, |\beta|=r-1} \|D^\alpha \omega^2 D^\beta \chi\|_{L_\infty(T)} 
\\ \le & C d^{-1} h_T^{n/2+r-1} \sum_{|\beta|=r-1} \|\omega D^\beta \chi\|_{L_\infty(T)}
\\  \le & C d^{-1} h_T^{n/2+r-1} \sum_{|\beta|=r-1} (\|(\omega-\hat{\omega}) D^\beta \chi\|_{L_\infty(T)}+\|\hat{\omega} D^\beta \chi\|_{L_\infty(T)})
\\ \le & C (\frac{h_T}{d^2} \|\chi\|_{L_2(T)}+\frac{h_T}{d} |\hat{\omega} \chi|_{H^1(T)})
\\ \le & C (\frac{h_T}{d^2} \|\chi\|_{L_2(T)}+\frac{h_T}{d} |(\hat{\omega}-\omega) \chi|_{H^1(T)}+\frac{h_T}{d} |\omega \chi|_{H^1(T)}).
\end{split}
\label{eq2-5}
\end{equation}
Using an inverse inequality, we find that
\begin{equation}
\begin{split}
\frac{h_T}{d} |(\hat{\omega}-\omega) \chi|_{H^1(T)} \le &\frac{h_T}{d} (|\omega|_{W_\infty^1(T)} \|\chi\|_{L_2(T)}+ \|\hat{\omega}-\omega\|_{L_\infty(T)} |\chi|_{H^1(T)})
\\ \le & C\frac{h_T}{d} (\frac{1}{d} \|\chi\|_{L_2(T)}+\frac{h_T}{d} |\chi|_{H^1(T)})
\\ \le & C\frac{h_T}{d^2} \|\chi\|_{L_2(T)}.
\end{split}
\label{eq2-6}
\end{equation}
Inserting (\ref{eq2-6}) into (\ref{eq2-5}) and the result into (\ref{eq2-4}) and (\ref{eq2-3}) completes the proof of (\ref{eq2-2}). 
\end{proof}

\section{Local $H^1$ estimates}
In this section we state and prove a local $H^1$ estimate that is valid on highly graded grids.  We now let $\Omega$ be a domain in $\mathbb{R}^n$, and let $\Omega_0$ be a bounded subdomain of $\Omega$.  We decompose $\partial \Omega \cap \partial \Omega_0$ (if it is nonempty) into a Dirichlet portion $\Gamma_D$ and a Neumann portion $\Gamma_N$.  For the sake of simplicity, we assume that $\Gamma_D$ is polyhedral and that $\Gamma_N$ is either polyhedral or Lipschitz.  Let $u$ satisfy
\begin{equation}
\begin{split}
-\dv(A \nabla u)+b\cdot \nabla u+cu=&f \hbox{ in } \Omega_0,
\\u=&g_D \hbox{ on } \Gamma_D,
\\ \frac{\partial u}{\partial n_A}=&g_N \hbox{ on } \Gamma_N.
\end{split}
\label{eq3-1}
\end{equation}
Here $A$ is an $n \times n$ coefficient matrix that is uniformly bounded and positive definite in $\Omega$, $b \in L_\infty(\Omega_0)^n$, $c \in L_\infty(\Omega_0)$, and $\frac{\partial}{\partial n_A}$ is the conormal derivative with respect to $A$.  We also assume that $\Omega \subset \mathbb{R}^n$.   Note that we make no assumptions about the differential equation solved by $u$ outside of $\Omega_0$.

Let $H_{D,0}^1(\Omega_0)=\{ u \in H^1(\Omega_0):u|_{\Gamma_D}=0 \}$, and let $H_{D}^1(\Omega_0)=u \in H^1(\Omega_0):u|_{\Gamma_D}=g_D\}$.  Also let  $H_<^1(B)=\{u \in H^1(\Omega_0): u|_{\Omega \setminus B}=0\}$ for subsets $B$ of $\Omega_0$.  Thus functions in $H_<^1(B)$ are zero on $\partial B \setminus \partial \Omega$, but may be nonzero on portions of $\partial B$ coinciding with $\partial \Omega$, or put in other terms, functions in $H_<^1(B)$ are compactly supported in $B$ modulo $\partial \Omega$.  Rewriting (\ref{eq3-1}) in its weak form, we find that $u \in H_{D}^1(\Omega_0)$ satisfies
\begin{equation}
\begin{split} 
L(u,v):=&\int_\Omega (A \nabla u \nabla v +b\cdot \nabla u v+cuv)\dx
\\ =&\int_\Omega fv \dx-\int_{\Gamma_N} g_N v \d \sigma, ~ v \in H_{D,0}^1(\Omega) \cap H_<^1(\Omega_0).
\end{split}
\label{eq3-2}
\end{equation}

Following \cite{NS74}, we do not assume that $L$ is coercive over $H^1(\Omega_0)$, but rather we make a {\it local} coercivity assumption: 
\vspace{3pt}
\newline {\it R1:  Local coercivity.}  There exists a constant $d_0>0$ such that if $B$ is the intersection of any open sphere of diameter $d \le d_0$ with $\Omega_0$, then $L$ is coercive over $H_<^1(B)$, that is, for some constant $C_1>0$,
\begin{equation}
(C_1)^{-1} \|u\|_{H^1(B)}^2 \le L(u,u) \le C_1 \|u\|_{H^1(B)}^2, ~ u \in H_<^1(B).
\label{eq3-2-1}
\end{equation}

\begin{remark}
\label{rem3-1}
{\rm R1 may be satisfied in one of two ways.  It may happen that $L$ is coercive over $H^1(\Omega_0)$, in which case no further argument is needed.  R1 so long as a Poincar\'e inequality 
\begin{equation}
\|u\|_{L_2 (B)} \le Cd\|u\|_{H^1(B)}
\label{eq3-2-1a}
\end{equation}
holds for balls $B$ as in R1 having small enough diameter (cf. Remark 1.2 of \cite{NS74}).  Such Poincar\'e inequalities always hold for interior balls.  If $B$ is the nontrivial intersection of an open ball with $\Omega$, then (\ref{eq3-2-1a}) holds for $d \le d_1$ small enough under the restrictions we have placed on $\partial \Omega \cap \partial \Omega_0$; here $d_1$ depends on the properties of $\partial \Omega \cap \partial \Omega_0$. } \end{remark}

Next we make assumptions concerning the finite element approximation $u_h$ of $u$.  Let $\T_0$ be a triangulation such that $\Omega_0 \subset \cup_{T \in \T_0} \overline{T}$ and $T \cap \Omega_0 \neq \emptyset$ for all $T \in \T_0$.  Let $h_T=diam(T)$ for $T \in \T_0$.  We denote our trial finite element space by $S_D$.  We do not assume that $S_D \subset H_D^1(\Omega)$.  In addition, we let $S_{D,0}=S_D \cap  H_{D,0}^1(\Omega_0)$ be our trial finite element space.  We assume that $u_h$ is the local finite element approximation to $u$ on $\Omega_0$, that is, $u_h \in S_D$ and 
\begin{equation}
L(u-u_h, v_h)=0 \hbox{ for all } v_h \in S_{D,0} \cap H_<^1(\Omega_0).
\label{eq3-2-3}
\end{equation}
We do not explicitly fix $u_h$ on the Dirichlet portion of the boundary, but rather implicitly assume that  $u_h|_{\Gamma_D}$ is set equal to some appropriate interpolant or projection of $g_D$.  

Next we state properties that $S_D$ and $S_{D,0}$ must possess in order to prove the desired local energy error estimate.  Let $\tilde{d} \le d_0$ be a fixed parameter, and let $G_1$ and $G$ be arbitrary subsets of $\Omega_0$ with $G_1 \subset G$ and $dist(G_1, \partial G \setminus \partial \Omega) =\tilde{d}>0$.  Then the following are assumed to hold:
\vspace{3pt}
\newline {\it A1:  Local interpolant.}  There exists a local interpolant $I$ such that for each $u \in H_<^1(G_1)$, $I u \in S_D  \cap H_<^1(G)$, and for each $ u \in H_{D,0}^1(\Omega_0)$, $I u \in S_{D,0}$.  
\vspace{3pt}
\newline {\it A2:  Inverse properties.}  For each $\chi \in S_D$, $T \in \T_h$, $1 \le p \le q \le \infty$, and $0 \le \nu \le s \le r$ with $r$ sufficiently small,
\begin{equation}
\|\chi\|_{W_q^s(T)} \le C h_T^{\nu-s +\frac{n}{p}-\frac{n}{q}} \|\chi\|_{W_p^\nu(T)}.
\label{eq3-2-4}
\end{equation}
\vspace{3pt}
\newline {\it A3:  Superapproximation.}  Let $\omega \in C^\infty(\Omega_0) \cap H_<^1(G_1)$ with $|\omega|_{W_\infty^j(\Omega_0)} \le C d^{-j}$ for integers $0 \le j \le r$ with $r$ sufficiently large.  For each $\chi \in S_{D,0}$  and for each $T \in \T_h$ satisfying $d \le h_T$,
\begin{equation}
\| \omega^2 \chi - I (\omega^2 \chi) \|_{H^1(T)}  \le C (\frac{h_T}{d} \|\nabla (\omega \chi) \|_{L_2(T)}+ \frac{h_T}{d^2} \| \chi\|_{L_2(T)}),
\label{eq3-2-2}
\end{equation}
where the interpolant $I$ is as in A1 above.  

\begin{remark}{\rm A1, A2, and A3 are satisfied by standard finite element spaces defined on shape-regular triangular grids.  A1 also essentially requires that the finite element mesh resolve $G \setminus G_1$, i.e., that $\tilde{d} \ge K \max_{T \cap G \neq \emptyset} h_T$ with $K$ large enough.  
} \end{remark}

We begin by proving a Caccioppoli-type estimate for ``discrete harmonic'' functions.  Such a statement was also proved in \cite{NS74} as a preliminary to local energy estimates, though the proof we give below more closely follows \cite{SW77}.

\begin{lemma}
\label{lem3-1}
Let $G_0 \subset G \subset \Omega_0$ be given, and let $dist(G_0, \partial G \setminus \partial \Omega)=d$ with $d \le 2 d_0$ where $d_0$ is the parameter defined in the assumption R1.  Let also A1, A2, and A3 hold with $\tilde{d}=\frac{d}{4}$, and assume that $u_h \in S_{D,0}$ satisfies
\begin{equation}
L(u_h, v_h)=0 \hbox{ for all } v_h \in S_{D,0} \cap H_<^1(\Omega_0).
\label{eq3-9-a}
\end{equation}
In addition let $\max_{T \cap G \neq \emptyset} \frac{h_T}{d} \le \frac{1}{4}$.  Then
\begin{equation}
\|u_h\|_{H^1(G_0)} \le 
 C\frac{1}{d} \|u_h\|_{L_2(G)}.
\label{eq3-9-b}
\end{equation}
Here $C$ depends only on the constants in (\ref{eq3-2-4}) and (\ref{eq3-2-2}) and the coefficients of $L$.
\end{lemma}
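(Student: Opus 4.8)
The plan is to run a discrete Caccioppoli argument in the spirit of \cite{SW77}: test the orthogonality relation (\ref{eq3-9-a}) against the interpolant of $\omega^2u_h$, extract a weighted gradient $\|\omega\nabla u_h\|$ using only the pointwise positive-definiteness of $A$, and then close the estimate using the improved superapproximation (\ref{eq3-2-2}) together with the inverse inequality (\ref{eq3-2-4}). First I would fix a smooth cutoff $\omega$ with $\omega\equiv1$ on $G_0$, with $\omega\in H_<^1(G_1)$ where $\operatorname{dist}(G_1,\partial G\setminus\partial\Omega)=\tilde d=\tfrac d4$, and with $|\omega|_{W_\infty^j(\Omega_0)}\le Cd^{-j}$ for $0\le j\le r$; this is possible since $\operatorname{dist}(G_0,\partial G\setminus\partial\Omega)=d$. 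Because $u_h\in S_{D,0}$ and $\omega\in H_<^1(G_1)$, the product $\omega^2u_h$ lies in $H_{D,0}^1(\Omega_0)\cap H_<^1(G_1)$, so by A1 the interpolant $I(\omega^2u_h)$ is an admissible test function in $S_{D,0}\cap H_<^1(\Omega_0)$. Writing $w=\omega^2u_h-I(\omega^2u_h)$, the relation (\ref{eq3-9-a}) gives the identity $L(u_h,\omega^2u_h)=L(u_h,w)$.

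Next I would expand the left-hand side of that identity. Using $\nabla(\omega^2u_h)=\omega^2\nabla u_h+2\omega u_h\nabla\omega$ and denoting by $c_A>0$ the ellipticity constant of $A$, I obtain
\begin{align*}
c_A\|\omega\nabla u_h\|_{L_2(\Omega_0)}^2\le{}&L(u_h,w)-2\int_{\Omega_0}\omega u_h\,A\nabla u_h\cdot\nabla\omega\,\dx\\
&-\int_{\Omega_0}(b\cdot\nabla u_h)\,\omega^2u_h\,\dx-\int_{\Omega_0}c\,\omega^2u_h^2\,\dx.
\end{align*}
The last three terms are routine: each carries either a factor $\nabla\omega$ (bounded by $Cd^{-1}$) or a factor $\omega\le1$, so that after the arithmetic--geometric mean inequality they contribute at most $\epsilon\|\omega\nabla u_h\|_{L_2(\Omega_0)}^2+Cd^{-2}\|u_h\|_{L_2(G)}^2$. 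Here I use $d\le2d_0$ to absorb the pure zeroth-order leftover $\|u_h\|_{L_2(G)}^2$ into $d^{-2}\|u_h\|_{L_2(G)}^2$.

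The heart of the matter, and the step I expect to be the main obstacle, is the superapproximation term $L(u_h,w)$: it unavoidably produces the \emph{unweighted} gradient $\nabla u_h$ on the transition region, where $\omega$ need not be bounded below, so one cannot simply recover it from $\nabla(\omega u_h)$. My resolution is to estimate $L(u_h,w)$ element by element and to trade this gradient away with the inverse estimate (\ref{eq3-2-4}). On each $T\subset G$ the improved bound (\ref{eq3-2-2}) gives
\begin{equation*}
\|w\|_{H^1(T)}\le C\Big(\frac{h_T}{d}\|\nabla(\omega u_h)\|_{L_2(T)}+\frac{h_T}{d^2}\|u_h\|_{L_2(T)}\Big),
\end{equation*}
so the principal part of $L(u_h,w)$ is controlled by $\sum_T\|\nabla u_h\|_{L_2(T)}\tfrac{h_T}{d}\|\nabla(\omega u_h)\|_{L_2(T)}$. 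Applying $\|\nabla u_h\|_{L_2(T)}\le Ch_T^{-1}\|u_h\|_{L_2(T)}$ cancels the factor $h_T$ exactly and leaves $\tfrac Cd\sum_T\|u_h\|_{L_2(T)}\|\nabla(\omega u_h)\|_{L_2(T)}\le\tfrac Cd\|u_h\|_{L_2(G)}\|\nabla(\omega u_h)\|_{L_2(G)}$, which is absorbable by one more arithmetic--geometric mean step. This is exactly where (\ref{eq3-2-2}) is essential: the surviving factor is $\nabla(\omega u_h)$ rather than $\nabla u_h$, and since $\omega u_h$ is supported in $G$ where $\max_{T\cap G\neq\emptyset}h_T/d\le\tfrac14$, this factor can be absorbed into the left-hand side directly. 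With the classical bound (\ref{eq2-1}) one would instead retain $\|\nabla u_h\|$ over the whole (larger) support, forcing the shell-by-shell iteration that only closes on quasi-uniform grids. The lower-order parts of $L(u_h,w)$ are treated identically.

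Finally I would collect all contributions and use $\|\nabla(\omega u_h)\|_{L_2(\Omega_0)}^2\le2\|\omega\nabla u_h\|_{L_2(\Omega_0)}^2+Cd^{-2}\|u_h\|_{L_2(G)}^2$, choosing every $\epsilon$ small enough that all multiples of $\|\omega\nabla u_h\|_{L_2(\Omega_0)}^2$ on the right can be absorbed on the left. This yields $\|\omega\nabla u_h\|_{L_2(\Omega_0)}\le Cd^{-1}\|u_h\|_{L_2(G)}$. Since $\omega\equiv1$ on $G_0$ this bounds $\|\nabla u_h\|_{L_2(G_0)}$, and the trivial estimate $\|u_h\|_{L_2(G_0)}\le\|u_h\|_{L_2(G)}\le Cd^{-1}\|u_h\|_{L_2(G)}$ (again using $d\le2d_0$) completes (\ref{eq3-9-b}). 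Note that this route uses only the pointwise positive-definiteness of $A$ and never the full local coercivity R1, which enters only through the bound $d\le2d_0$.
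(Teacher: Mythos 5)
Your proposal is correct and follows essentially the same route as the paper's proof: a cutoff $\omega$, Galerkin orthogonality tested against $I(\omega^2 u_h)$, the improved superapproximation (\ref{eq3-2-2}) combined with the inverse estimate (\ref{eq3-2-4}) so that the surviving gradient factor is $\nabla(\omega u_h)$ and can be kicked back, exactly as in (\ref{eq3-5})--(\ref{eq3-9}). The only deviation is at the start: the paper lower-bounds $\|\omega u_h\|_{H^1(G)}^2$ by $CL(\omega u_h,\omega u_h)$ via the local coercivity R1 in (\ref{eq3-4}), whereas you extract only $c_A\|\omega\nabla u_h\|_{L_2}^2$ from the pointwise positive-definiteness of $A$ and recover the zeroth-order part of $\|u_h\|_{H^1(G_0)}$ trivially from $d\le 2d_0$ --- a legitimate and slightly more economical variant, since the target bound (\ref{eq3-9-b}) already carries $d^{-1}\|u_h\|_{L_2(G)}$ on the right.
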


\begin{proof} We assume that $G_0$ is the intersection of a ball $B_{\frac{d}{4}}$ of radius $\frac{d}{4}$ with $\Omega_0$; the general case may be proved using a covering argument.  Let then $G_1$ and $G_2$ be the intersections with $\Omega_0$ of balls having the same center as $G_0$ and having radii $\frac{d}{2}$ and $\frac{3d}{4}$, respectively, and without loss of generality let $G$ be the corresponding ball of radius $d$.  Let then $\omega \in C_0^\infty(G_1)$ be a cutoff function which is $1$ on $G_0$ and which satisfies $\|\omega\|_{W_\infty^j(G_1)} \le Cd^{-j}$, $0 \le j \le r$.  We may then apply the assumptions A1 through A3 to the pairs $G_1$ and $G_2$, and $G_2$ and $G$.  

Using (\ref{eq3-2-1}), we first compute that 
\begin{equation}
\|u_h\|_{H^1(G_0)}^2 \le \|\omega u_h \|_{H^1(G)}^2 \le  CL(\omega u_h ,\omega u_h).
\label{eq3-4}
\end{equation}
Using the fact that $\|\nabla \omega \|_{L_\infty(\Omega)} \le \frac{C}{d}$, we compute that for any $\epsilon>0$,
\begin{equation}
\begin{split}
L(\omega&u_h, \omega u_h)= L(u_h,\omega^2 u_h)
 \\ & - \int_\Omega u_h [ A \nabla(\omega u_h) \nabla \omega +u_h A \nabla \omega \nabla \omega  
 + A \nabla \omega \nabla (\omega u_h) +\omega u_h b \nabla \omega ]\dx  
 \\ \le & |L(u_h, \omega^2 u_h)|+ C\frac{1}{d^2 \epsilon} \|u_h\|_{L_2(G)}^2+\epsilon \|\omega u_h \|_{H_1(G)}^2 .
\end{split}
\label{eq3-5}
\end{equation}
Next we use (\ref{eq3-9-a}), (\ref{eq3-2-2}),  and the fact that $\|\omega^2 u_h\|_{H^1(G)} \le \|\omega u_h\|_{H^1(G)}+\frac{C}{d}\|u_h\|_{L_2(G)}$ to compute
\begin{equation}
\begin{split}
L( u_h, \omega^2 u_h) = & L(u_h, \omega^2 u_h-I (\omega^2 u_h))
\\ \le & 
C \sum_{T \cap G_2 \neq \emptyset} h_T \|u_h\|_{H^1(T)} (\frac{1}{d} |\omega  u_h|_{H^1(T)}+\frac{1}{d^2} \|u_h\|_{L_2(T)}).
\end{split}
\label{eq3-6}
\end{equation}
Using (\ref{eq3-2-4}) and the fact that $\frac{h_T}{d} \le 1$, we have for $\epsilon$ as above that
\begin{equation}
\begin{split}
C h_T \|u_h&\|_{H^1(T)}   (\frac{1}{d} |\omega u_h|_{H^1(T)}+\frac{1}{d^2} \|u_h\|_{L_2(T)})
\\  \le & \frac{C}{\epsilon d^2} \|u_h\|_{L_2(T)}^2+\epsilon |\omega u_h|_{H^1(T)}^2.
\end{split}
\label{eq3-7}
\end{equation}

Inserting (\ref{eq3-7}) into (\ref{eq3-6}), noting that $T \cap G_2 \neq \emptyset$ implies that $T \subset G$ (since $\max_{T \cap G \neq \emptyset} h_T \le \frac{d}{4}$) and carrying out further elementary manipulations then yields that for $\epsilon>0$, 
\begin{equation}
L(u_h,  \omega^2 u_h ) 
\\ \le   \frac{C}{\epsilon d^2}\|u_h\|_{L_2(G)}^2+ \epsilon \|\omega u_h \|_{H^1(G)}^2.  
\label{eq3-8}
\end{equation}
Inserting (\ref{eq3-8} into (\ref{eq3-5}) and the result into (\ref{eq3-4}) yields
\begin{equation}
\|\omega u_h \|_{H^1(G)}^2 
 \le  \frac{C}{\epsilon d^2} \|u_h\|_{L_2(G)}^2+2 \epsilon \|\omega u_h\|_{H^1(G)}^2.
\label{eq3-9}
\end{equation}
Taking $\epsilon=\frac{1}{4}$ so that we may kick back the last term above, employing the triangle inequality, and inserting the result into (\ref{eq3-4}) then completes the proof of (\ref{eq3-9-b}).  
\end{proof}

We now prove a local energy error estimate.  In our proof below we shall follow \cite{NS74} by using a local finite element projection in order to split the finite element error into an approximation error and a ``discrete harmonic'' term which may be bounded using Lemma \ref{lem3-1}.  We note, however, that the use of a local finite element projection is not necessary, and our final local error estimate may in fact be proved with some simple modifications to the proof of Lemma \ref{lem3-1} above.  These two styles of proof are essentially equivalent.  Local finite element projections have been used for example in \cite{NS74}, \cite{SW77}, \cite{SW95}, and \cite{AL95} in order to prove local a priori error estimates.  The methodology of Lemma \ref{lem3-1} in which no local projections are used has been employed in for example \cite{De04b} and \cite{Guz06} in order to prove local a priori error estimates and in \cite{LN03} and \cite{Dem07} in order to prove local a posteriori error estimates.

\begin{theorem}
\label{t3-3}
Let $G_0 \subset G \subset \Omega_0$ be given, and let $dist(G_0, \partial G \setminus \partial \Omega)=d$ with $d \le \min\{2 d_0, d_1\}$ where $d_0$ is the parameter defined in the assumption R1 and $d_1$ is defined in Remark \ref{rem3-1}.  Let also A1, A2, and A3 hold with $\tilde{d}=\frac{d}{16}$.  In addition let $\max_{T \cap G \neq \emptyset} \frac{h_T}{d} \le \frac{1}{16}$.  Then
\begin{equation}
\begin{split}
\|u-u_h\|_{H^1(G_0)} \le & C \min_{u_h -\chi \in S_{D,0} } (\|u-\chi\|_{H^1(G)}+\frac{1}{d} \|u-\chi\|_{L_2(G)})
\\&+C \frac{1}{d} \|u-u_h\|_{L_2(G)}.
\label{eq3-12}
\end{split}
\end{equation}
Here $C$ depends only on the constant $C$ in (\ref{eq2-2}) and the coefficients of $L$.
\end{theorem}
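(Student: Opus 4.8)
The plan is to follow the strategy announced before the statement: introduce a local finite element projection $R_h u$ of $u$, split $u-u_h=(u-R_hu)+(R_hu-u_h)$, bound the first (approximation) term by the local best approximation, and bound the second (discrete harmonic) term using Lemma \ref{lem3-1}. First I would fix intermediate subdomains $G_0\subset G'\subset G$ obtained by intersecting concentric balls with $\Omega_0$, with the separations between consecutive subdomains proportional to $d$; the constants $\tilde d=d/16$ and $\max_{T\cap G\neq\emptyset}h_T/d\le 1/16$ in the hypotheses are chosen precisely so that Lemma \ref{lem3-1} applies on the inner pair while A1--A3 and R1 remain available on each pair. I would then define $R_hu\in S_D$ as the local Ritz projection relative to $G'$, taking the same Dirichlet data as $u_h$ on $\Gamma_D$, with $L(u-R_hu,v_h)=0$ for all $v_h\in S_{D,0}\cap H_<^1(G')$; existence and uniqueness follow from R1, since $L$ is coercive on the finite-dimensional subspace $S_{D,0}\cap H_<^1(G')$ of $H_<^1(G')$ once $G'$ has diameter at most $d_0$.

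With this in hand the discrete harmonic term is immediate. Writing $w_h:=R_hu-u_h\in S_{D,0}$, the Galerkin orthogonality (\ref{eq3-2-3}) together with the defining property of $R_hu$ gives $L(w_h,v_h)=L(R_hu-u,v_h)+L(u-u_h,v_h)=0$ for every $v_h\in S_{D,0}\cap H_<^1(G')$, so $w_h$ is discrete harmonic on $G'$ in exactly the local sense that the proof of Lemma \ref{lem3-1} uses. Applying that lemma on the pair $(G_0,G')$ yields $\|w_h\|_{H^1(G_0)}\le \tfrac{C}{d}\|w_h\|_{L_2(G')}$, and a triangle inequality reduces this to $\tfrac{C}{d}\|u-R_hu\|_{L_2(G')}+\tfrac{C}{d}\|u-u_h\|_{L_2(G')}$, whose last summand is controlled by the pollution term of (\ref{eq3-12}). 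Combining with $\|u-R_hu\|_{H^1(G_0)}\le\|u-R_hu\|_{H^1(G')}$ reduces the whole theorem to one local estimate for the projection,
\[
\|u-R_hu\|_{H^1(G')}+\tfrac1d\|u-R_hu\|_{L_2(G')}\le C\min_{u_h-\chi\in S_{D,0}}\big(\|u-\chi\|_{H^1(G)}+\tfrac1d\|u-\chi\|_{L_2(G)}\big).
\]

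The main obstacle is this last inequality, namely the local quasi-optimality (a C\'ea lemma with pollution) of $R_hu$. I would prove it by the cutoff argument already used for Lemma \ref{lem3-1}: given an admissible $\chi$, set $\phi_h:=R_hu-\chi\in S_{D,0}$, choose a cutoff $\omega$ equal to $1$ on $G'$ and supported just inside $G$, use R1 to bound $\|\phi_h\|_{H^1(G')}^2\le CL(\omega\phi_h,\omega\phi_h)$, split off one factor of $\omega$ by the product rule as in (\ref{eq3-5}) to produce commutator terms controlled by $\tfrac1d\|\phi_h\|_{L_2}$, and rewrite $L(\phi_h,\omega^2\phi_h)=L(R_hu-u,\omega^2\phi_h)+L(u-\chi,\omega^2\phi_h)$. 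The first form vanishes after subtracting its interpolant $I(\omega^2\phi_h)\in S_{D,0}\cap H_<^1(G')$, leaving only the superapproximation residual, while the second is bounded by continuity of $L$ and the triangle bound $\|\omega^2\phi_h\|_{H^1}\le\|\omega\phi_h\|_{H^1}+\tfrac{C}{d}\|\phi_h\|_{L_2}$. Here the improved result Theorem \ref{t2-1} (hypothesis A3) is essential: the residual $\|\omega^2\phi_h-I(\omega^2\phi_h)\|_{H^1(T)}$ must be measured by $\tfrac{h_T}{d}\|\nabla(\omega\phi_h)\|_{L_2(T)}$ rather than $\tfrac{h_T}{d}\|\nabla\phi_h\|_{L_2(T)}$, because on a highly graded shape-regular grid $\tfrac{h_T}{d}$ need not be small on elements where $\omega$ vanishes, and only the small support of $\omega\phi_h$ renders the sum over $T$ absorbable after a kick-back in $\epsilon$. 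Absorbing the resulting $\epsilon\|\omega\phi_h\|_{H^1}^2$ terms and one last triangle inequality closes the estimate.

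Finally, exactly as the remark preceding the theorem indicates, the projection can be bypassed: applying the same cutoff computation directly to $\psi:=\chi-u_h\in S_{D,0}$ and using (\ref{eq3-2-3}) to replace $L(\psi,I(\omega^2\psi))$ by $L(\chi-u,I(\omega^2\psi))$ gives $\|\chi-u_h\|_{H^1(G_0)}\le C(\tfrac1d\|\chi-u_h\|_{L_2(G)}+\|u-\chi\|_{H^1(G)})$ in a single pass, after which $\|\chi-u_h\|_{L_2(G)}\le\|u-\chi\|_{L_2(G)}+\|u-u_h\|_{L_2(G)}$ and a triangle inequality yield (\ref{eq3-12}); the two routes are equivalent, and in either case Theorem \ref{t2-1} is the decisive ingredient that removes the quasi-uniformity assumption of \cite{NS74}.
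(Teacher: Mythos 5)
Your overall strategy is the one announced in the paper, and your final paragraph (working directly with $\psi=\chi-u_h$ and rerunning the proof of Lemma \ref{lem3-1}) is a correct route that the paper itself points to as an equivalent alternative. However, your main route through a local Ritz projection $R_hu$ of $u$ has a genuine gap. First, $R_hu$ is under-specified: orthogonality against $S_{D,0}\cap H_<^1(G')$ supplies only as many equations as there are degrees of freedom interior to $G'$, so you must also prescribe $R_hu$ outside $G'$; if you make the natural choice $R_hu\in H_<^1(G')$, then the claimed quasi-optimality $\|u-R_hu\|_{H^1(G')}+\frac1d\|u-R_hu\|_{L_2(G')}\le C\min_\chi(\dots)$ is false, because $R_hu$ is forced to vanish near $\partial G'\setminus\partial\Omega$ while $u$ need not be small there, so the left side has an $O(1)$ boundary layer that the right side does not see. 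Second, in your C\'ea-type argument the test function $I(\omega^2\phi_h)$ is supported in $G$ (since $\omega\equiv 1$ on $G'$ and $\phi_h=R_hu-\chi$ is not compactly supported), hence it does not lie in $S_{D,0}\cap H_<^1(G')$ and the orthogonality defining $R_hu$ cannot be invoked on it. Third, the commutator terms produce $\frac1d\|\phi_h\|_{L_2(G)}$ over the full annulus $G\setminus G'$, which again contains the non-approximable part of $R_hu$. All of this can be repaired with an extra layer of nested domains and by proving quasi-optimality only on an interior subdomain, but as written the key step fails.

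The paper's proof is engineered to avoid exactly this difficulty: it projects the cut-off function $\omega u$ rather than $u$. Since $\omega u\in H_<^1(G)$, the projection $P(\omega u)$ onto $S_D\cap H_<^1(G)$ is well posed, and no quasi-optimality is needed at all --- the approximation term is bounded by the crude stability estimate $\|\omega u-P(\omega u)\|_{H^1(G)}\le C\|\omega u\|_{H^1(G)}\le C(\|u\|_{H^1(G)}+\frac1d\|u\|_{L_2(G)})$. The discrete part $P(\omega u)-u_h$ is discrete harmonic relative to $G_1$ (where $\omega\equiv 1$), so Lemma \ref{lem3-1} and the Poincar\'e inequality (\ref{eq3-2-1a}) finish it; the competitor $\chi$ is introduced only at the very end by replacing $(u,u_h)$ with $(u-\chi,u_h-\chi)$, which preserves (\ref{eq3-2-3}) since $u_h-\chi\in S_{D,0}$. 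This translation trick is the idea missing from your main route, and it lets Theorem \ref{t2-1} enter only once, through Lemma \ref{lem3-1}. Your closing ``bypass'' argument is the legitimate way to reach (\ref{eq3-12}) without any projection, and if fleshed out it yields a complete proof; your diagnosis of why the improved superapproximation estimate is the decisive ingredient on graded meshes is exactly right.
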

\begin{proof}
We assume that $G_0$ is the intersection of a ball $B_{\frac{d}{2}}$ of radius $\frac{d}{2}$ with $\Omega_0$; the general case may be proved using a covering argument.  Let $G_1$ be the intersection with $\Omega_0$ of a ball having the same center as $G_0$ and having radius $\frac{3d}{4}$, and without loss of generality let $G$ be the corresponding ball of radius $d$.  Let then $\omega \in C_0^\infty(G)$ be a cutoff function which is $1$ on $G_1$ and which satisfies $\|\omega\|_{W_\infty^j(G)} \le Cd^{-j}$, $0 \le j \le r$.  Note that we may apply Lemma \ref{lem3-1} with $G_0$ on the left hand side of the estimate (\ref{eq3-9-b}) and $G_1$ on the right hand side.  

Next we let $P(\omega u)$ be a local finite element projection of $\omega u$.  In particular, we let $P(\omega u) \in S_D \cap H_<^1(G)$ with $u_h-P(\omega u) =0 $ on $\Gamma_D \cap \partial G_1$ satisfy 
\begin{equation}
L(\omega u-P(\omega u), v_h)=0, ~ v_h \in S_{D,0} \cap H_<^1(G).
\label{eq3-13}
\end{equation}
The local coercivity condition (\ref{eq3-2-1}) then implies the stability estimate
\begin{equation}
\|P(\omega u)\|_{H^1(G)} \le C \|\omega u \|_{H^1(G)}.
\label{eq3-14} 
\end{equation}

Recalling that $u_h-P(\omega u)=0$ on $\Gamma_D \cap \partial G_1$ while employing (\ref{eq3-9-b}) and using (\ref{eq3-2-1a}) while recalling that $\omega \equiv 1$ on $G_1$, we compute that
\begin{equation}
\begin{split}
\|u-u_h&\|_{H^1(G_0)}   \le \|\omega u- P(\omega u)\|_{H^1(G_0)}+\|P(\omega u)-u_h\|_{H^1(G_0)}
\\ \le  & \|\omega u-P(\omega u)\|_{H^1(G)}+\frac{C}{d}\|P(\omega u)-u_h\|_{L_2(G_1)}
\\ \le & \|\omega u-P(\omega u)\|_{H^1(G)}+\frac{C}{d}(\|P(\omega u)-\omega u\|_{L_2(G_1)}+\|u-u_h\|_{L_2(G_1)})
\\ \le & C \|\omega u-P(\omega u)\|_{H^1(G)}+\frac{C}{d} \|u-u_h\|_{L_2(G_1)}.
\label{eq3-16}
\end{split}
\end{equation}
Next we employing the triangle inequality along with (\ref{eq3-14}) while recalling that $\|\omega \|_{W_\infty^j(G_2)} \le Cd^{-j}$ in order to find that
\begin{equation}
\begin{split}
\|\omega u-P(\omega u)\|_{H^1(G)} \le & C \|\omega u\|_{H^1(G)}
\\ \le & C(\|u\|_{H^1(G)}+\frac{1}{d} \|u\|_{L_2(G)}).
\end{split}
\label{eq3-17}
\end{equation}

In order to complete the proof of (\ref{eq3-12}), we first insert (\ref{eq3-17}) into (\ref{eq3-16}) and finally write $u-u_h=(u-\chi)+(\chi-u_h)$ with $u_h-\chi \in S_{D,0}$.  

\end{proof}

\bibliographystyle{amsalpha}
\providecommand{\bysame}{\leavevmode\hbox to3em{\hrulefill}\thinspace}
\providecommand{\MR}{\relax\ifhmode\unskip\space\fi MR }
\providecommand{\MRhref}[2]{%
  \href{http://www.ams.org/mathscinet-getitem?mr=#1}{#2}
}
\providecommand{\href}[2]{#2}

\end{document}